\documentclass[a4paper,11pt]{article}
\usepackage{fullpage} 

\usepackage{caption}
\usepackage{subcaption}
\usepackage{booktabs,siunitx}

\usepackage{amsmath}
\usepackage{amssymb}
\usepackage{amsthm}
\usepackage{mathtools}
\usepackage{mathdots}
\usepackage{appendix}
\usepackage{graphicx}
\usepackage{enumerate}
\usepackage{microtype}
\usepackage{mleftright}
\usepackage{mdframed}
\usepackage{authblk}
\usepackage{tcolorbox}
\usepackage[english]{babel}
\usepackage[utf8]{inputenc}
\usepackage{algorithm}
\usepackage[noend]{algpseudocode}
\usepackage{xspace}
\usepackage{tikz}
\usepackage{pgfplots}
\pgfplotsset{width=8cm,compat=1.9}
\usepackage{mathdots}
\usepackage{url}
\usepackage{forest}
\forestset{default preamble={for tree={s sep+=1cm}}}

\tcbset{
    rounded corners,
    colback = white,
    before skip = 0.2cm,
    after skip = 0.5cm,
    boxrule = 1.5pt,
    arc = 5pt
}

\makeatletter
\renewcommand{\ALG@name}{Protocol}
\makeatother
\allowdisplaybreaks

\graphicspath{ {./figures/} }

\setlength{\arrayrulewidth}{0.5mm}

\usepackage{algorithm}
\usepackage[noend]{algpseudocode}



\newtheorem{theorem}{Theorem}[section]
\newtheorem*{theorem*}{Theorem}

\newtheorem{lemma}[theorem]{Lemma}
\newtheorem{corollary}[theorem]{Corollary}
\newtheorem*{corollary*}{Corollary}
\newtheorem{conjecture}[theorem]{Conjecture}
\newtheorem{observation}[theorem]{Observation}

\theoremstyle{definition}
\newtheorem{definition}[theorem]{Definition}

\newcommand{\eps}{\varepsilon}




\newif\ifstoc
\stocfalse

\begin{document}
\title{Diagonalization Games}

\author[1,5]{Noga Alon} 
\author[6]{Olivier Bousquet}
\author[4]{Kasper Green Larsen}
\author[2,3,6]{\\Shay Moran}
\author[2]{Shlomo Moran}
\affil[1]{Departments of Mathematics and Computer Science, Tel Aviv University}
\affil[2]{Department of Computer Science, Technion, Israel}
\affil[3]{Department of Mathematics, Technion, Israel}
\affil[4]{Department of Computer Science, Aarhus University}
\affil[5]{Department of Mathematics, Princeton University}
\affil[6]{Google Research}

\maketitle

\begin{abstract}
We study several variants of a combinatorial game which is based on Cantor's diagonal argument.
    The game is between two players called Kronecker and Cantor. 
    The names of the players are motivated by the known fact 
    that Leopold Kronecker did not appreciate Georg Cantor’s arguments about the infinite, 
    and even referred to him as a ``scientific charlatan''. 

In the game Kronecker maintains a list of $m$ binary vectors, 
    each of length $n$, and Cantor's goal is to produce a new binary vector 
    which is different from each of Kronecker's vectors, or prove that no such vector exists.
    Cantor does not see Kronecker's vectors but he is allowed to ask queries of the form 
    \begin{center}
    ``\emph{What is bit number $j$ of vector number~$i$?}''
    \end{center}
    What is the minimal number of queries with which Cantor can achieve his goal?
    How much better can Cantor do if he is allowed to pick his queries \emph{adaptively}, based on Kronecker's previous replies?

The case when $m=n$ is solved by diagonalization using $n$ (non-adaptive) queries.
    We study this game more generally, and prove an optimal bound in the adaptive case and nearly tight upper 
    and lower bounds in the non-adaptive case.


    

\end{abstract}

\section{Introduction}

The concept of infinity has been fascinating philosophers and scientists for hundreds, perhaps thousands of years.
    The work of Georg Cantor (1845 -- 1918) played a pivotal role in the mathematical treatment of the infinite.
    Cantor's work is based on a simple notion which asserts that two (possibly infinite) sets 
    have the same size whenever their elements can be paired in one-to-one correspondence with each other \cite{Cantor1874}.
    Despite being simple, this notion has counter-intuitive implications:
    for example, a set can have the same size as a proper subset of it\footnote{E.g.\ the natural numbers and the even numbers, via the correspondence ``$n\mapsto 2n$''.}; this phenomena is nicely illustrated by \emph{Hilbert's paradox of the Grand Hotel}, see e.g.~\cite{enwikiHilbert}.
    
This simple notion led Cantor to develop his theory of sets, which forms the basis of modern mathematics.
    Alas, Cantor's set theory was controversial at the start, and only later became widely accepted:
\begin{center}
    \emph{The objections to Cantor's work were occasionally fierce: Leopold Kronecker's public opposition and personal attacks included describing Cantor as a "scientific charlatan", a "renegade" and a "corrupter of youth". 
    Kronecker objected to Cantor's proofs that the algebraic numbers are countable, and that the transcendental numbers are uncountable, results now included in a standard mathematics curriculum.~\cite{enwiki}}
\end{center}

\begin{figure}
\centering
\begin{minipage}{.5\textwidth}
  \centering
  \includegraphics[width=.5\linewidth]{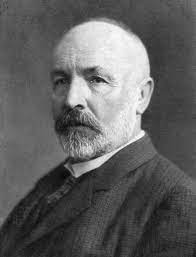}
  \caption{Georg Cantor (1845 -- 1918)}
\end{minipage}%
\begin{minipage}{.5\textwidth}
  \centering
  \includegraphics[width=.5\linewidth]{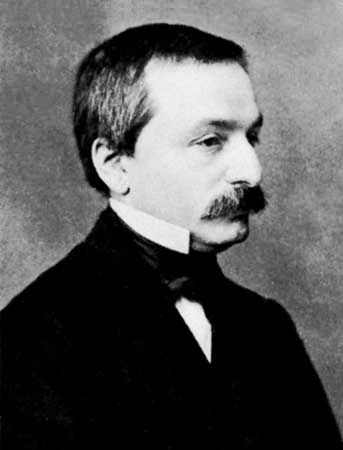}
  \captionof{figure}{Leopold Kronecker (1823 -- 1891)}
\end{minipage}
\end{figure}

\subsection{Diagonalization}
One of the most basic and compelling results in set theory is that not all infinite sets have the same size.
    To prove this result, Cantor came up with a beautiful argument, called diagonalization. 
    This argument is routinely taught in introductory classes to mathematics,
    and is typically presented as follows.
    Let $\mathbb{N}$ denote the set of natural numbers  
    and let $\{0,1\}^\mathbb{N}$ denote the set of all infinite binary vectors. 
    Clearly both sets are infinite, but it turns out that they do not have the same size:
    assume towards contradiction that there is a one-to-one correspondence $j\mapsto v_j$,
    where $v_j = (v_j(1),v_j(2),\ldots)$ is the infinite binary vector corresponding to $j\in\mathbb{N}$.
    Define a vector
    \[u = ( 1-v_1(1), 1-v_2(2), \ldots).\]
    That is, $u$ is formed by letting its $j$'th entry be equal to the negation of the $j$'th entry of $v_j$.

Notice that this way the resulting vector $u$ disagrees with $v_j$ on the $j$'th entry, 
    and hence $u \neq v_j$ for all $j$.
    Thus, we obtain a binary vector which does not correspond to any of the natural numbers via the assumed correspondence -- a contradiction.

\smallskip

Rather than reaching a contradiction, it is instructive to take a positivist perspective
    according to which diagonalization can be seen as a constructive procedure that does the following:
    \begin{tcolorbox}
    \begin{center}
        \emph{Given binary vectors $v_1, v_2,\ldots$,
        find a binary vector $u$ such that $u \neq v_j$ for all $j$.}
    \end{center}
    \end{tcolorbox}
    Moreover, notice that Cantor's diagonal argument involves querying only 
        a single entry per each of the input vectors $v_j$ (i.e.\ the ``diagonal'' entries $v_j (j)$).
        Thus, it is possible to construct $u$ while using only a little information about the input vectors $v_i$'s (a single bit per vector).

    \medskip

In this manuscript we study a finite variant of the problem in which $m$ binary vectors 
    $v_1,\ldots, v_m$ of length $n$ are given and the goal is to produce a vector $u$
    which is different from all of the $v_i$'s, or to report that no such vector exists, 
    while querying as few as possible entries of the $v_i$'s.
    We first study the case when $m<2^n$ whence such a $u$ is guaranteed to exist,
    and the goal boils down to finding one, and later the case when $m\geq 2^n$.





\section{The Cantor-Kronecker Game}


  Consider a game between two players called Kronecker and Cantor.  
  In the game there are two parameters $m$ and $n$, 
    where $m$, $n$ are positive integers. 
    Kronecker maintains a set $V = \{v_1, v_2,\ldots  , v_m\}$ of $m$ binary vectors, each of length $n$. 
    Cantor's goal is to produce a binary vector $u$, also of length $n$, which differs from each $v_i$, 
    or to report that no such vector exists.
    To do so, he is allowed to ask queries, where each query is of the form 
    \begin{center}
        ``What is bit number $j$ of vector number $i$?'',
    \end{center}
    where  $1 \leq j \leq n$, $1 \leq i \leq m$.
    Kronecker is answering each query being asked. 
    The objective of Cantor is to minimize the number of queries enabling him to produce $u$, whereas Kronecker tries to maximize the number of queries. 
    We distinguish between two versions of the game:
    \begin{itemize}
        \item In the \emph{adaptive} version Cantor presents his queries to Kronecker in a sequential manner, and may decide on the next query as a function of Kronecker's answers to the previous ones.
        \item In the \emph{oblivious} version Cantor must declare all of his queries in advance, before getting answers to any of them.  
    \end{itemize}
    For $m\leq n$ the smallest number of queries, both in the adaptive and oblivious versions, is $m$. 
    Indeed, Cantor can query bit number $i$ of $v_i$ for all $1 \leq  i \leq  m$ and return a vector $u$ whose $i$'th bit differs from the $i$'th bit of $v_i$, for all $i$.  
    The lower bound is even simpler: if Cantor asks less than $m$ queries then there is some vector $v_i$ 
    about which he has no information at the end of the game. 
    In this case he cannot ensure that his vector $u$ will not be equal to this $v_i$.

\paragraph{Note.}
   After the completion of this paper, Nikhil Vyas and Ryan Williams informed us that related diagonalization tasks have been studied in the past, both in learning theory by~\cite{BF72}, and later in circuit complexity theory. For example, in~\cite{KANNAN1982}, Kannan employs a voting technique to find a binary $n$-vector that is distinct from all vectors in a given list. More recently, and independently of our work, Vyas and Williams have studied a variant of the Cantor-Kronecker game  for the case when $m<2^n$~\cite{VW23}.
    Vyas and Williams paper focuses on methods for minimizing the number of queries needed to determine the $i$'th bit of a specific missing vector, and use them to derive lower bounds in circuit complexity. Most related to this work is their Theorem 18 and Remark 19, where they provide upper and lower bounds for the adaptive case which are tight within a multiplicative factor of $2$. (Theorem~\ref{thm:adaptive} below closes this gap.)

\paragraph{Organization.} 
We begin with the case where $m<2^n$: in the next section (Section~\ref{sec:small}) we derive nearly tight bounds both in the adaptive and oblivious cases. We do so by exhibiting and analyzing near optimal strategies for Cantor.
Then, in Section~\ref{sec:large} we consider the case where $m\geq 2^n$ and derive an optimal bound of $m\cdot n$ in this case (for both  the oblivious and the adaptive versions). We do so by exhibiting and analyzing an optimal strategy for Kronecker. Finally, in Section~\ref{sec:future} we    {discuss some algorithmic aspects}, and conclude with some suggestions to future research.

\begin{figure}
      \centering
\begin{tabular}{ c c }
  \toprule
  \vspace{-2mm}
  & $v_1 = \textcolor{red}{0}, 1,  1,  0,  1,  0$ \\
  \vspace{-2mm}
  & $v_2 = 1, \textcolor{red}{0},  0,  1,  1,  1$ \\
  \vspace{-2mm}
  & $v_3 = 1,  1,  \textcolor{red}{1},  0,  0, 0$ \\
  \vspace{-2mm}
  & $v_4 = 0,  1,  0,  \textcolor{red}{1},  1, 0$ \\
  \vspace{-2mm}
  & $v_5 = 1,  1,  0,  1,  \textcolor{red}{0}, 1$ \\
  & $v_6 = 0,  1,  1,  1,  1, \textcolor{red}{1}$ \\
  \midrule
  & $u = \textcolor{blue}{1}, \textcolor{blue}{1}, \textcolor{blue}{0}, \textcolor{blue}{0}, \textcolor{blue}{1}, \textcolor{blue}{0}$\\
  \bottomrule
\end{tabular}
    \caption{An illustration of Cantor's diagonalization: the vector $u$ at the bottom is not equal to any of the $v_i$'s at the top.}
    \label{fig:my_label}
\end{figure}

\section{The Cantor-Kronecker Game with $m<2^n$}\label{sec:small}

\subsection{Adaptive Version}

\begin{theorem}\label{thm:adaptive}
Let $g(n,m)$ denote  the smallest number of queries that suffices for Cantor when he is allowed to use adaptive strategies.
Then,
\[
g(n,m)=
\begin{cases}
m &m\leq n, \\
2m - n & n < m < 2^n.
\end{cases}
\]
\end{theorem}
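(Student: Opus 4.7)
The plan is to prove $g(n,m)=2m-n$ for $n<m<2^n$ by separately establishing matching upper and lower bounds; the case $m\le n$ is already handled by the diagonal argument given in the preamble.

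\textbf{Upper bound.} My plan is an inductive strategy for Cantor that combines a majority vote on one coordinate with a recursive call on the remaining $n-1$ coordinates. At the top level Cantor picks an odd integer $k_1$ and queries the first coordinate of $v_1,\ldots,v_{k_1}$; he then sets $u(1)$ to the negation of the majority bit, killing at least $\lceil k_1/2\rceil$ of those queried vectors via coordinate $1$. In the worst case Kronecker forces all remaining indices---the queried minority (at most $\lfloor k_1/2\rfloor$) plus all $m-k_1$ unqueried vectors---into a residual $(n-1,\,m-\lceil k_1/2\rceil)$ subproblem. I would take $k_1=1$ when $m\le 2^{n-1}$ and $k_1=2m-2^n+1$ when $m>2^{n-1}$; in both regimes $k_1$ is odd and just large enough that the residual size is strictly less than $2^{n-1}$, so the subproblem is still solvable and the inductive hypothesis applies. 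Because $k_1=2\lceil k_1/2\rceil-1$, the recurrence telescopes exactly:
\[
k_1+g(n-1,\,m-\lceil k_1/2\rceil)=\bigl(2\lceil k_1/2\rceil-1\bigr)+\bigl(2(m-\lceil k_1/2\rceil)-(n-1)\bigr)=2m-n.
\]

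\textbf{Lower bound.} For the lower bound, my plan is an oblivious adversary strategy for Kronecker that uses no lookahead: for each coordinate $j$, Kronecker maintains a private counter $c_j$ (initially $0$) and answers each query $(i,j)$ with $c_j\bmod 2$, then increments $c_j$. Equivalently, the answers to queries at coordinate $j$ alternate $0,1,0,1,\ldots$ in the order they arrive. Let $k_j$ be the number of queries at coordinate $j$ in Cantor's final transcript and let $K_j=\{i:(i,j)\in Q,\ b_{ij}\ne u(j)\}$ be the set of vectors killed via coordinate $j$ under Cantor's output $u$. Because the answers at $j$ are balanced to within one, for any choice of $u(j)$ we have $|K_j|\le\lceil k_j/2\rceil$. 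Validity of $u$ forces $\bigcup_j K_j=[m]$, hence
\[
m\le\sum_j|K_j|\le\sum_j\lceil k_j/2\rceil=\tfrac{1}{2}\Bigl(q+\bigl|\{j:k_j\text{ odd}\}\bigr|\Bigr)\le\tfrac{1}{2}(q+n),
\]
which rearranges to $q\ge 2m-n$.

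\textbf{Main obstacle.} The delicate step is the upper bound near the boundary $m$ close to $2^n$: there the residual subproblem is very tight, forcing $k_1$ to be nearly $m$, and it is essential that $k_1$ be \emph{odd} to gain the $n$-query saving over the naive $2m$ bound that pure majority voting would yield. I would also need to verify that for each subproblem the strategy falls neatly into either the inductive hypothesis or the diagonalization base case, which involves some case analysis at the regime boundaries $m'=n'$ and $m'=2^{n'-1}$. The lower bound is conceptually cleaner; the key takeaway is that a simple per-coordinate alternation, with no lookahead whatsoever, already suffices to force the extra $m-n$ queries beyond the trivial $m$.
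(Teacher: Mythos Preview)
Your proposal is correct and follows essentially the same route as the paper. For the upper bound, the paper also queries the first coordinate of $2x-1$ vectors, sets $u(1)$ to the minority bit, and recurses on $(n-1,m-x)$; your $k_1$ is their $2x-1$, and your choices $k_1=1$ versus $k_1=2m-2^n+1$ match their $x=1$ versus $x=m-2^{n-1}+1$ exactly. For the lower bound, the paper has Kronecker keep the answers at each coordinate balanced and then runs the same counting $m\le\sum_j\lceil b_j/2\rceil\le\frac12\sum_j(b_j+1)$; your per-coordinate alternation is simply a concrete implementation of that balancing, and your arithmetic $\sum_j\lceil k_j/2\rceil=\tfrac12(q+\lvert\{j:k_j\text{ odd}\}\rvert)$ is a slightly sharper version of the same inequality.
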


The case $1 \leq m \leq n$ is proved in the previous section so we assume $n \leq m < 2^n $.

\paragraph{Upper Bound.}
We present a strategy for Cantor which combines diagonalization with another simple idea. To illustrate this idea let us first consider the case $m=n+1$. This special case appeared as a question in the \emph{2022 Grossman Math Olympiad} for high-school students, and so perhaps the reader might enjoy trying to solve it before continuing reading.

Let $v_1,\ldots, v_{n+1}$ be the input vectors.
    Cantor begins with querying the first bit of $v_1,v_2,$ and of $v_3$.
    Getting the answers, there is a bit $\eps$ so that at least two vectors among $v_1,v_2,v_3$
    have their first bit equals to $\eps$. 
    Cantor now defines the first bit of $u$ to be $u(1)=1-\eps$
    and can remove the two vectors among $v_1,v_2,v_3$ whose first bit equals $\eps$.
    Now Cantor is left with at most $n-1$ vectors and can therefore
    set the last $n-1$ coordinates of $u$ according to the diagonalization construction.

\smallskip

The general case is handled similarly by induction on $n$: 
    for $n=1$ since $n\leq m < 2^n$, also $m$ must be $1$ and the result is trivial.

Assuming the result for $n-1$, let $v_1,\ldots, v_m$ be the $m$ vectors of Kronecker.
    First,  note that there is an integer $x$ satisfying $1 \leq x \leq \lceil m/2 \rceil$ so that $n-1 \leq m-x <2^{n-1}$:
    {e.g., for $m\in [n+1, 2^{n-1}]$  let $ x= 1$ (thus $m-x=m-1$), and for  $m\in [2^{n-1}+1,2^n-1]$ let $x=m-2^{n-1}+1$ (thus $m-x=2^{n-1}-1$).
    }
    
    Having $x$ as above, Cantor first queries the first bit of each of the vectors $v_1,v_2, \ldots ,v_{2x-1}$. 
    (Note that $2x-1 \leq m$ hence this is possible). 
    Getting the answers, there is a bit $\eps\in \{0,1\}$ 
    so that at least $x$ of the vectors have their first bit equal to $\eps$. 
    Cantor now defines the first bit of his vector~$u$ to be $1-\eps$, 
    removes from the set $V$ exactly $x$ of the vectors whose first bit is $\eps$, 
    and defines as $V'$ the set of all restrictions of the remaining $m-x$ vectors to their last $n-1$ coordinates.  
    Note that $n-1 \leq m-x <2^{n-1}$. 

By the induction hypothesis, 
    Cantor can now play the game for the set $V'$ producing an appropriate vector $u'$ 
    by asking at most $2(m-x)-(n-1)$ additional queries. 
    The total number of queries is thus $(2x-1)+2(m-x)-(n-1)= 2m-n$, as needed. 
    The vector $u$ obtained by concatenating the $1$-bit vector $1-\eps$ and the vector $u'$ is
    clearly different from each member of~$V$. 
    This completes the induction step argument and finishes the proof of the upper bound.


\paragraph{Lower Bound.}
For the lower bound, we present a strategy for Kronecker which essentially mirrors Cantor's strategy from the upper bound.
    Suppose Cantor manages to produce the required vector $u$ after making exactly $b_j$ 
    queries in coordinate number $j$ of some of the vectors~$v_i$. 
    Kronecker chooses his answers ensuring that for each such $j$, the answers for bits in the $j$'th location are balanced, that is, at most $\lceil b_j/2\rceil$ of the answers are $0$ and at most $\lceil b_j/2\rceil$ of the answers are $1$. 

Consider the vector $u$ produced by Cantor. 
    For every $1 \leq  j \leq n$, there are at most $\lceil b_j/2\rceil$ vectors $v_i$ 
    known to be different than $u$ in coordinate number $j$. 
    Thus altogether there are at most
\[\sum_{j=1}^n \Bigl\lceil\frac{b_j}{2}\Bigr\rceil \leq \sum_{j=1}^n \frac{b_j+1}{2}. \]
     vectors $v_i$ that are known to Cantor to be different than $u$. 
     In order to ensure $u$ is indeed different from each $v_i$ this number has to be at least $m$ and hence
     \[m\leq \sum_{j=1}^n \frac{b_j+1}{2}.\]
     By rearranging, this implies that the total number of queries $\sum_{j=1}^{n}b_j$ 
     must be at least $2m-n$, as stated.
$\qedsymbol{}$

\subsection{Oblivious Version}

\begin{theorem}
Let $f(n,m)$ denote the  smallest number of queries that suffices for Cantor when he is restricted to use oblivious strategies. 
    Then, 
   
 \[
 f(n,m) = \begin{cases}
 m   &m \leq n\\
 m\Bigl(\log\bigl\lceil\frac{m}{n}\bigr\rceil +  o\bigl(\log\bigl\lceil\frac{m}{n}\bigr\rceil\bigr)\bigr) &n < m < 2^n.
 \end{cases}
 \]
Quantitatively, for all $n< m< 2^n$ 
\[
m\cdot\Bigl(\log\Bigl(\frac{m}{n-\log m + 1}\Bigr) - 1 \Bigr) \leq f(n,m)\leq
%
  m\Bigl\lceil  \log\big( \frac{2m}{n} \bigr)+2\log\bigl( \log\bigr(\frac{2m}{n}\bigl) \bigr)+1\Bigr\rceil,
\]
\end{theorem}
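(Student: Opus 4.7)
My plan is to handle the upper and lower bounds separately.

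For the \textbf{upper bound}, I use a block-diagonalization strategy. Set $d:=\lceil\log(2m/n)+2\log\log(2m/n)+1\rceil$, partition $[n]$ into $k=\lfloor n/d\rfloor$ blocks $B_1,\ldots,B_k$ of size $\approx d$, and assign the $m$ vectors to the blocks round-robin so that each block is assigned at most $\lceil m/k\rceil\leq md/n+1$ vectors. Cantor's oblivious queries consist of, for each $i$, the $d$ bits of $v_i$ inside its assigned block $B_{\sigma(i)}$ --- exactly $md$ queries overall. After receiving the answers, Cantor diagonalizes independently inside each block, picking $u|_{B_j}\in\{0,1\}^{B_j}$ distinct from $v_i|_{B_j}$ for every $i$ with $\sigma(i)=j$. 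This works as long as $2^d>\lceil m/k\rceil$, and the choice of $d$ is precisely calibrated so that $2^d\geq 4(m/n)\log^2(2m/n)\geq md/n+1$, an easy verification.

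For the \textbf{lower bound}, I use a counting argument against any oblivious strategy. Fix queries $S_1,\ldots,S_m\subseteq[n]$ with $d_i:=|S_i|$ and $Q:=\sum_i d_i$, together with a decision rule $f\colon\{0,1\}^Q\to\{0,1\}^n$ sending answer sequences $a=(a_1,\ldots,a_m)$, with $a_i\in\{0,1\}^{S_i}$, to Cantor's outputs. The key observation is: if $f(a)|_{S_i}=a_i$ for some pair $(a,i)$, then Kronecker sets $v_i:=f(a)$ and extends the remaining $a_j$'s to any distinct full vectors, defeating Cantor. So Cantor's winning condition is $f(a)|_{S_i}\neq a_i$ for every $(a,i)$, which for each $u\in U:=f(\{0,1\}^Q)$ confines the fiber $f^{-1}(u)$ to $\prod_i(\{0,1\}^{S_i}\setminus\{u|_{S_i}\})$, so $|f^{-1}(u)|\leq\prod_i(2^{d_i}-1)$. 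Summing over $u\in U$ and using $|U|\leq 2^n$ yields
\[
2^Q=\sum_{u\in U}|f^{-1}(u)|\leq 2^n\prod_{i=1}^m(2^{d_i}-1),\quad\text{equivalently}\quad\prod_{i=1}^m\bigl(1-2^{-d_i}\bigr)\geq 2^{-n}.
\]
Convexity of $x\mapsto-\log(1-2^{-x})$ and Jensen's inequality at $x=d_i$ then give $Q/m\geq-\log(1-2^{-n/m})$; routine algebra (bounding $1-2^{-n/m}$ by a suitable affine function of $n/m$) converts this into the cleaner stated form $Q\geq m\bigl(\log(m/(n-\log m+1))-1\bigr)$.

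The main technical hurdle I expect is the quantitative polishing: verifying the $\log\log$ correction in the upper-bound block size $d$, and massaging the raw Jensen estimate in the lower bound into the precise ``$n-\log m+1$'' denominator appearing in the theorem. The structural ideas --- block-diagonalization on the upper side, fiber-counting driven by the ``$f(a)|_{S_i}\neq a_i$'' winning condition on the lower side --- are otherwise standard, and the shift by $\log m$ in the denominator naturally reflects the ``extension room'' Kronecker needs to realize $m$ distinct vectors consistent with his chosen answers.
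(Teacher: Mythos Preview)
Your upper bound is essentially identical to the paper's: both partition the coordinates into $\lfloor n/d\rfloor$ blocks of size $d$, assign the $m$ vectors to blocks as evenly as possible, query each vector fully on its block, and then diagonalize independently inside each block. The calibration of $d$ is the same.

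Your lower bound takes a genuinely different route. The paper argues constructively: given query sets $J_1,\ldots,J_m$ with small total size, it builds a \emph{covering assignment} for Kronecker via a two-phase greedy process---first choose each $f_i$ to cover at least a $2^{-t_i}$ fraction of the surviving cube until fewer than $m/2$ vectors remain, then eliminate the survivors one per step. Your argument is instead a clean fiber-counting argument: from the winning condition $f(a)\vert_{S_i}\neq a_i$ you correctly deduce $\prod_i(1-2^{-d_i})\geq 2^{-n}$ and then apply Jensen to the convex function $x\mapsto -\log(1-2^{-x})$. This is valid and elegant, and it does establish the asymptotic statement $f(n,m)=m\log(m/n)\bigl(1+o(1)\bigr)$.

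However, your final claim is wrong: the Jensen step yields only
\[
\frac{Q}{m}\;\geq\; -\log\bigl(1-2^{-n/m}\bigr)\;=\;\log\frac{m}{n}+O(1),
\]
and no ``routine algebra'' will convert this into the stated bound with denominator $n-\log m+1$. Concretely, at $m=2^{n-1}$ your inequality gives roughly $Q/m\gtrsim n-\log n$, whereas the paper's stated bound is $n-3$; the gap is $\Theta(\log n)$ and is unbounded as $m\to 2^n$. The shift by $\log m$ in the paper's denominator does \emph{not} come from any distinctness requirement on Kronecker's vectors---there is none; repetitions are allowed, so your remark about ``extension room'' is off target. It comes from the second phase of the paper's greedy argument, where each of the last $m/2$ indices removes an entire surviving vector rather than merely a $2^{-d}$ fraction of the cube; this extra leverage is genuinely lost once you compress everything into the single product inequality $\prod_i(1-2^{-d_i})\geq 2^{-n}$. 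So your approach proves the asymptotic theorem but not the quantitative lower bound as stated.
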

\noindent
The case $1 \leq m \leq n$ is proved above so we assume $n < m < 2^n $.

\paragraph{Upper Bound.}

Like in the adaptive case, we present a strategy for Cantor which combines diagonalization with another simple idea.
    We first illustrate this idea by handling the case~$m=n+1$,
    and again, we encourage the reader to try and handle this case before continuing reading.

Let $v_1,\ldots, v_{n+1}$ be the input vectors.
    Cantor begins with querying the first two bits of each of $v_1,v_2,$ and $v_3$ (for a total of 6 queries).
    Notice that there are $2^2=4$ possible combinations of $0/1$ patterns on the first two bits,
    but at most three of them are realized by $v_1,v_2,v_3$.
    Hence, there must be a pair of bits $\eps_1,\eps_2$ which is not realized by $v_1,v_2,$ nor $v_3$:
    \[(\eps_1,\eps_2)\notin\Bigl\{\bigl(v_1(1),v_1(2)\bigr),\bigl(v_2(1),v_2(2)\bigr),\bigl(v_3(1),v_3(2)\bigr)\Bigr\}.\]
    Thus, by setting $u(1)=\eps_1$ and $u(2)=\eps_2$, Cantor rules out $v_1,v_2,v_3$
    and is left with $n-2$ vectors $v_3,\ldots, v_{n+1}$ which can be obliviously ruled out with the last $n-2$ using 
    diagonalization. 
    
\smallskip

For the general case, let $d$ be an integer (to be determined later).
    Pick mutually disjoint subsets of coordinates $J_1,\ldots, J_{\lfloor n/d\rfloor}\subseteq [n]$, each of size $d$,
    and pick a partition of the $m$ vectors to $\lfloor n/d\rfloor $ subsets $V_1,\ldots, V_{\lfloor n/d\rfloor}$
    such that the partition is as balanced as possible (i.e.\ the difference between each pair of sizes is $\leq 1$).
    Thus, each set has size
    \[\lvert V_i\rvert \leq \Bigl\lceil \frac{m}{\lfloor n/d \rfloor} \Bigr\rceil\leq \frac{2md}{n}. \]
    Cantor queries (obliviously) as follows.
    \begin{center}
    \emph{For each $i$ and each vector in $V_i$ query all the coordinates in $J_i$.}
    \end{center}
    Thus, the total number of queries is exactly $m\cdot d$. 
    Now, notice that if $d$ satisfies
    \begin{equation}\label{eq:ob1}
    2^{d} >  \frac{2md}{n},
    \end{equation}
    then there must exist an assignment $f_i:J_i\to \{0,1\}$
    such that $f_i$ disagrees with each of the vectors in $V_i$ on at least one coordinate in $J_i$.
    Hence Cantor can output the vector $u$, which agrees with each of the $f_i$ on $J_i$.
   Note that  Equation~\ref{eq:ob1} is satisfied iff
     $\frac{2^d}{d} >  \frac{2m}{n}$; 
     since $m>n$, it can be verified    that this inequality holds when
     $d\geq \log( \frac{2m}{n} )+2\log( \log(\frac{2m}{n}) )+1 $.
     Thus for   
     $d=\Bigl\lceil  \log\big( \frac{2m}{n} \bigr)+2\log\bigl( \log\bigr(\frac{2m}{n}\bigl) \bigr)+1\Bigr\rceil $, 
     the total number of queries is at most
   
 \[
 m\cdot d =  m\Bigl\lceil  \log\big( \frac{2m}{n} \bigr)+2\log\bigl( \log\bigr(\frac{2m}{n}\bigl) \bigr)+1\Bigr\rceil.
 \]
 \noindent
  {$\bigl[$A partition similar to the above  is used in Theorem 18 of \cite{VW23} in an adaptive algorithm which aims at minimizing the number of queries required to retrieve any single bit of a specific missing vector.$\bigr]$}

\paragraph{Lower Bound.}

The lower bound proof is based on the following simple idea.
    Let $J_i$ denote the set of coordinates of $v_i$ which Cantor queries.
    Thus, the total number of queries Cantor uses is $\lvert J_1\rvert + \ldots + \lvert J_m\rvert$.
    Now, let $f_i:J_i\to\{0,1\}$ 
    denote Kronecker's answers for the queries on $v_i$.
    The crucial observation is that the vector $u$ that Cantor outputs must satisfy
    \[(\forall i): u\vert_{J_i}\neq f_i.\]
    Indeed, if $u\vert_{J_i} = f_i$ for some $i$ then Kronecker can fail Cantor by picking his $i$'th vector $v_i$ to be equal to Cantor's output $u$
    (which would be consistent with Kronecker's answers).

We summarize the above consideration with a definition that characterizes the winning (or losing) strategies of Cantor in the oblivious case.
\begin{definition}[Covering Assignments]\label{def:covering}
We say that a sequence of sets $J_1,\ldots, J_m\subseteq [n]$ \emph{has a covering assignment}
if there are $m$ functions $f_i:J_i\to\{0,1\}$ such that every binary vector~$v\in\{0,1\}^n$
agrees with one of the $f_i$ on $J_i$ (i.e.\ $v\vert_{J_i}=f_i$). 
\end{definition}

Thus, Kronecker has a winning strategy if and only if the sequence of sets $J_1,\ldots, J_m$ that Cantor queries
    has a covering assignment. The following lemma establishes the lower bound.
    \begin{lemma}
    Let $J_1,\ldots, J_m\subseteq [n]$ such that 
    \begin{equation}\label{eq:size}
    \lvert J_1\rvert + \ldots +\lvert J_m\rvert < m\cdot\Bigl(\log\bigl(\frac{m}{n-\log m + 1}\bigr) - 1 \Bigr).  
    \end{equation}
    Then, $J_1,\ldots, J_m$ has a covering assignment.

Equivalently, if for each vector $v_i$ Cantor queries its entries in $J_i$ and Equation~\ref{eq:size} holds,
then Kronecker has a winning strategy.
    \end{lemma}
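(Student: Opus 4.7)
My plan is to prove the lemma using the probabilistic method. I would choose each $f_i : J_i \to \{0,1\}$ uniformly and independently at random, and show that with positive probability every $v \in \{0,1\}^n$ is covered. For a fixed $v$, the events ``$v|_{J_i} \neq f_i$'' are independent across $i$ and each has probability $1 - 2^{-|J_i|}$, so the probability $v$ is uncovered is $\prod_i (1 - 2^{-|J_i|})$. By linearity of expectation the expected number of uncovered vectors is $\mathbb{E}[X] = 2^n \prod_i (1 - 2^{-|J_i|})$. If $\mathbb{E}[X] < 1$, then some realization of the $f_i$'s achieves $X = 0$, yielding a covering assignment.

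The next step is to maximize this product over all choices of $J_i$'s with $\sum |J_i| = S$. Since the function $x \mapsto \log(1 - 2^{-x})$ is concave on $x > 0$ (one checks the second derivative is negative), Jensen's inequality gives $\prod_i (1 - 2^{-|J_i|}) \leq (1 - 2^{-d})^m$ where $d = S/m$. Hence $\mathbb{E}[X] \leq 2^n (1 - 2^{-d})^m$, and it suffices to show this is strictly less than $1$; rearranging, this is equivalent to the condition $d < -\log(1 - 2^{-n/m})$.

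The main obstacle is the final algebraic step of verifying that the hypothesis $S < m(\log(m/(n - \log m + 1)) - 1)$ implies $d < -\log(1 - 2^{-n/m})$. The straightforward estimate $1 - 2^{-n/m} \leq (n \ln 2)/m$ yields the sufficient condition $d < \log(m/(n \ln 2))$, which matches the hypothesis up to constants when $m \ll 2^n$ but can fall short when $m$ is close to $2^n$. In that regime I would retain the exact inequality $(1 - 2^{-d})^m < 2^{-n}$ and exploit the factor $n - \log m + 1$ through a more careful analysis --- for instance, by first fixing roughly $\log m$ of the $f_i$'s to serve as an ``index'' (reducing the effective dimension of the residual covering problem to $n - \log m$) and then applying the probabilistic argument on the remaining coordinates, so that the $n$ in the exponent is effectively replaced by $n - \log m + 1$.
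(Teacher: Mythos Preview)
Your probabilistic setup and the Jensen step are correct and in fact closely parallel the paper's argument (the paper uses a greedy choice of the $f_i$'s rather than a random one, but the resulting bound on the number of uncovered vectors after $k$ choices is the same, namely $2^n\prod_{i\leq k}(1-2^{-|J_i|})$). The gap is exactly where you locate it: demanding $\mathbb{E}[X]<1$ only yields the sufficient condition $d \lesssim \log(m/n)$, which is genuinely weaker than the hypothesis $d < \log\bigl(m/(n-\log m+1)\bigr)-1$ when $m$ is close to $2^n$ --- the shortfall is of order $\log n$, not a constant. Your proposed patch, using roughly $\log m$ of the $f_i$'s as an ``index'' to cut the ambient dimension to $n-\log m$, does not work: the sets $J_i$ are fixed in advance and need not contain any prescribed coordinates, and the task is to cover all of $\{0,1\}^n$, not a subcube, so a handful of $f_i$'s cannot reduce the dimension of the residual problem.

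The missing idea is simpler and purely combinatorial. Sort the $J_i$ by size and split them into two halves. Apply your probabilistic (equivalently, greedy) argument only to the $m/2$ sets of smallest size, but with the relaxed target that at most $m/2$ vectors remain uncovered rather than none; this replaces the threshold $2^n\to 1$ by $2^n\to m/2$ and is exactly the source of the extra $-\log m$ in the denominator. Then spend the remaining $m/2$ assignments deterministically, each covering one still-uncovered vector (possible whenever $|J_i|\geq 1$; if some $J_i=\emptyset$ the lemma is trivial). With this two-phase scheme the condition becomes $2^n\exp\bigl(-m/2^{d+1}\bigr)\leq m/2$, which rearranges to $d\leq \log\bigl(m/(n-\log m+1)\bigr)-1$, matching the statement.
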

\begin{proof}

Let $t_i=\lvert J_i\rvert$ and let $t=\sum_i t_i$.
    Assume, without loss of generality, that $t_1 \leq t_2 \leq \ldots \leq t_m$. 
    To prove a lower bound of the form $md$ for $t$, where $d$ will be specified later,
    we show that if $t$ is smaller than $md$ 
    then there are $m$ functions $f_i:J_i\to\{0,1\}$ 
    so that for every possible vector $v \in \{0,1\}^n$ there is $i\leq m$ so that $v\vert_{J_i} = f_i$. 

We do so by explicitly constructing the $f_i$'s 
    (which corresponds to describing a winning strategy for Kronecker). 
    Starting with the set $V=\{0,1\}^n$ of all possible potential vectors $z$, go over the vectors $v_i$ in order. 
    In step $i$ we choose the function $f_i:J_i\to\{0,1\}$ such that $\lvert \{v\in V: v\vert_{J_i} = f_i\}\rvert$ is maximized. 
    Since there are $2^{t_i}$ possible choices for $f_i$, the maximizing choice satisfies
    \[\Bigl\lvert \{v\in V: v\vert_{J_i} = f_i\}\Bigr\rvert \geq \frac{\lvert V\rvert}{2^{t_i}}.\]
    After picking $f_i$, we remove all the vectors of $V$ that agree with $f_i$ and proceed to the next step. 
    Therefore, after the first $i$ steps, the size of the set $V$ of the remaining vectors is at most 
    \[2^n\prod_{j=1}^i (1-1/2^{t_j}).\]

We can continue with this analysis until the size of the set $V$ becomes smaller than $1$, namely the set becomes empty. 
    It is a bit better, however, to apply a simpler reasoning once the size of $V$ becomes smaller than $2^d$, 
    and only argue that at least one vector from $V$ is eliminated in each step.
    (Continuing the same analysis as before would only guarantee that $V$ shrinks 
    by a factor  of $(1-1/2^{t_i})$ which by the choice of $d$ would be roughly $1-1/2^d < 1$).  
    To simplify the computation it is not too wasteful to apply the simpler analysis 
    already when the size of $V$ becomes smaller than $m/2$. 
    If this happens in the first $m/2$ steps then by removing a single vector
    in each of the remaining steps we will eliminate all of the vectors.
    This means that if
\[
2^n \prod_{j=1}^{m/2} \bigl(1-1/2^{t_j}\bigr) \leq \frac{m}{2}
\]
then the sequence $J_1.\ldots,J_m$ has a covering assignment. 
    Since $d$ is such that the total number of queries is $m\cdot d$,
    the above amounts to $\sum_{j=1}^{m/2} t_j \leq md/2$;
    that is, the average $t_j$ for $1 \leq j \leq m/2$ is at most $d$. 
    This implies that 
\begin{align*}
2^n \prod_{j=1}^{\frac{m}{2}} \bigl(1-\frac{1}{2^{t_j}}\bigr) &\leq 2^n \prod_{j=1}^{\frac{m}{2}} \exp\Bigl(-\frac{1}{2^{t_j}}\Bigr) \tag{$1+x \leq \exp(x)$ for all $x\in\mathbb{R}$}\\
                                                              &= 2^n \exp\Bigl(-\sum_{j=1}^{\frac{m}{2}}\frac{1}{2^{t_j}}\Bigr)\\
                                                              &\leq 2^n \exp\Bigl(-\frac{m}{2^{d+1}}\Bigr),
\end{align*}
where the last inequality follows because $\exp(-x)$ is decreasing and because  
\[
\sum_{j=1}^{\frac{m}{2}}\frac{1}{2^{t_j}} \geq \frac{m}{2}\cdot \frac{1}{2^{\frac{1}{m/2}\sum_{j=1}^{m/2}t_j}} \geq \frac{m}{2}\cdot \frac{1}{2^d},
\]
which follows by convexity of the function $f(x)=2^x$ and because $t_1\leq t_2\leq\ldots \leq t_m$.

We have thus shown that if $\lvert J_1\rvert + \ldots +\lvert J_m\rvert = m\cdot d$ such that
\[
2^n \exp\Bigl(-\frac{m}{2^{d+1}}\Bigr) \leq \frac{m}{2}
\]
then the sequence $J_1,\ldots, J_m$ has a covering assignment. 
The last inequality surely holds provided
\[
\frac{m}{2^{d+1}}  \geq n+1-\log m.
\]
That is, provided
\[
2^{d+1} \leq \frac{m }{n+1-\log m},
\]
or
\[
d \leq \log \Bigl(\frac {m}{n+1 -\log m}\Bigr)-1
\]
completing the proof.  
\end{proof}

\section{The Cantor-Kronecker Game with $m\geq 2^n$ }\label{sec:large}

Assume now that Kronecker's list $V$ consists of $m\geq 2^n$ binary vectors of length $n$.
    In this case $V$ may contain all the binary vectors of length $n$ and there is no vector Cantor can output that is different from each vector on Kronecker's list.    
     In this regime it is more natural to first focus on the decision problem in which Cantor's goal is to decide whether $V$ contains $\{0,1\}^n$, and if this is not the case, to provide a vector which is not in $V$.\footnote{{Later we will see that the decision and search variants are in fact equivalent.}} 
    Clearly Cantor can achieve this if he queries all $mn$ possible queries. Can he do better? 

We first observe that $mn$ queries are in fact needed in the oblivious case: 
    assume that Cantor submits only $mn-1$ queries, and leaves the $j$'th bit of $v_i$ unqueried. 
    Then Kronecker may set~$v_i$ to be the unique occurrence of the all ones vector $1^n$, and set the remaining $m-1$ vectors in $V$ to include all $2^n-1$ vectors that are different from the all ones vector. 
    Clearly, it is necessary for Cantor to query also the last bit of $v_i$ in order to see whether $v_i$ is the all ones vector or not.
    Consequently, Cantor must query all $mn$ queries in the oblivious case.
    
How about the adaptive case? 
    A similar argument shows that for $m=2^n$, Kronecker can force $mn=2^n n$ queries also in the adaptive case, 
    by using a list which contains each binary vector of length $n$ exactly once: indeed, if only $mn-1$ bits are queried, then the last, yet unqueried bit, belongs to a vector which occurs only once in $V$. Hence it is necessary to get the value of this bit in order to verify that $V$ contains all $2^n$ vectors. 

The case when $m>2^n$ turns out to be more subtle. 
    Nevertheless, we prove that $mn$ queries are necessary even in this case. We start with introducing some notation.

\paragraph{Notation.}
Each step of the game consists of a query by Cantor followed by a response by Kronecker. The status of the game after each such step is given by an $m\times n$ matrix  $L$, where $L(i,j)$ denotes the status of the $j$'th bit of $v_i$, that is: $L(i,j)\in \{ 0,1,\star\}$, where
$L(i,j)=\star$ means that the $j$'th bit of $v_i$ was not queried yet, and otherwise $L(i,j)$ equals the value of this bit as answered by Kronecker.
\begin{definition}
$\mathtt{FIXED}(L) =\bigl\{v\in L: v\in\{0,1\}^n\bigr\}$.
That is, $\mathtt{FIXED}(L)$ is the set of all vectors in $L$ that were fully queried by Cantor.
\end{definition}
\begin{definition}
  $L$ is \emph{complete} if $\mathtt{FIXED}(L) = \{0,1\}^n$.
  \end{definition}
  \begin{definition}
   A subset  $S$ of $2^n$ rows of $L$ is {\em useful} if it either contains all the $2^n$ binary vectors of length $n$, or it can be {\em converted} to this set by replacing each $\star$-entry in $S$ by $0$ or $1$.
   \end{definition}
   \begin{definition}
    A matrix $L$ is {\em unblocked} if it can be completed; that is, if $L$ has a useful subset. Otherwise $L$  is called  {\em blocked}.
   \end{definition}

 Notice that for $m\geq 2^n$, the $m$ by $n$ matrix all whose entries are $\star$ is unblocked.
 
 As a warmup, and to get used to the definitions, let us assume first that Cantor's queries the vectors one by one according to their order; i.e.\ he first queries all the bits of $v_1$ from left to right, then all the bits of $v_2$ from left to right, and so on. 
 We use the following strategy for Kronecker:
 when Cantor queries the $j$'th bit of $v_i$ (i.e.\ the value of $L(i,j)$), Kronecker replies according to the following  ``$0$ first" strategy:
 \begin{equation}\label{str:CK}
\text{modified value of }L(i,j) = \begin{cases}
  1&\mbox{If setting }L(i,j)   \mbox{ to 0 blocks }$L$\\
0&\mbox{otherwise}
\end{cases}
\end{equation}
 
 It is not hard to verify that since Cantor queries the vectors one by one, and from left (most significant bit) to right, the following matrix is produced: each of the first $m-2^n+1$ rows will be set to the all-zeros vector, and the last $2^n-1$ rows will be set to the $2^n-1$ non zero vectors in increasing lexicographical order: starting with $0^{n-1}1$ and ending with $1^n$. Hence Cantor is forced to query all $mn$ entries as in the oblivious case.
 
 Interestingly, it turns out that, for {\em any} strategy of Cantor, the above ``0 first" strategy of Kronecker forces Cantor to make $mn$ queries.

\begin{theorem}\label{thm:KC} Let $m>2^n$. Then for any strategy of Cantor, the ``0 first" strategy of
Kronecker forces Cantor to make $mn$ queries in order to determine if $L$ contains $\{0,1\}^n$.
\end{theorem}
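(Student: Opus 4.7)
The plan is to describe Kronecker's ``0 first'' responses and then to maintain two invariants along every play of the game: Invariant A (Unblocked), asserting that $L$ is unblocked at every stage, and Invariant B (Incomplete), asserting that $\mathtt{FIXED}(L) \subsetneq \{0,1\}^n$ so long as at least one $\star$-entry remains. Together the two invariants prevent Cantor from answering: Invariant A forbids him from ruling out ``$L$ contains $\{0,1\}^n$'' (some completion of $L$ always covers $\{0,1\}^n$), while Invariant B forbids him from confirming ``$L$ contains $\{0,1\}^n$'', since for any $v\notin\mathtt{FIXED}(L)$ each free row has a $\star$-coordinate on which he could disagree with $v$, yielding a completion of $L$ that misses $v$ entirely.

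I would prove Invariant A by induction on the number of queries. If the ``0 first'' rule answers $0$, the post-query matrix is unblocked by the rule's very definition. If it answers $1$, then setting $L(i,j)=0$ would block $L$; I would then argue that every pre-query useful subset $S$ must contain row $i$ with its $j$-th coordinate assigned to $1$, for otherwise $S$ would survive the setting $L(i,j)=0$ (either because $i\notin S$, or because $S$'s assignment of $i$ already has $j$-th bit $0$), contradicting the blocking hypothesis. Consequently, setting $L(i,j)=1$ preserves every such $S$.

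For Invariant B I would track two quantities: $U := \lvert \mathtt{FIXED}(L)\rvert$ and $F$ := the number of rows that still contain a $\star$-entry. Since a useful subset must cover each of the $2^n - U$ vectors outside $\mathtt{FIXED}(L)$ by a distinct free row, Invariant A forces $U + F \geq 2^n$ at every stage. Initially $U + F = m > 2^n$. Each event of a row becoming fully queried decreases $F$ by $1$ and either leaves $U$ fixed (the row is a duplicate of a vector already in $\mathtt{FIXED}$, so $U+F$ drops by $1$) or increases $U$ by $1$ (new vector, so $U+F$ is preserved); hence $U+F$ is non-increasing. The crux is that once $U+F$ first hits $2^n$, any further duplicate would push $U+F$ strictly below $2^n$ and thus block $L$, contradicting Invariant A; therefore every subsequently completed row must contribute a brand new vector to $\mathtt{FIXED}$. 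In particular, $U$ can reach $2^n$ only when $F$ has simultaneously been driven to $0$, i.e.\ only when every row is fully queried and all $mn$ entries have been queried.

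The main obstacle will be verifying that Kronecker's local ``0 first'' rule is actually consistent with the global no-duplicate behavior demanded above. The delicate case is a row $r$ with only one $\star$ remaining, whose two possible completions $v^0$ and $v^1$ (obtained by setting the last bit to $0$ or $1$) both happen to lie in $\mathtt{FIXED}$; then both responses would produce a duplicate and block $L$. I would resolve this by showing that, under Invariant A with $U+F=2^n$, any useful subset must assign $r$ to some vector in $\{v^0,v^1\}$ that is \emph{not} already in $\mathtt{FIXED}$, so at least one of $v^0,v^1$ lies outside $\mathtt{FIXED}$; the ``0 first'' rule then picks exactly the completion that yields a new vector, and the inductive structure is preserved.
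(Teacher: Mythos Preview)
Your Invariant~A argument is correct, but the argument for Invariant~B has a real gap. The $U+F$ bookkeeping shows that once $U+F$ drops to $2^n$ it stays there and no further duplicates occur; in that regime $U+F=2^n$ indeed gives $U=2^n\Leftrightarrow F=0$. What you never rule out is that $U$ reaches $2^n$ \emph{while $U+F$ is still strictly above $2^n$}: a priori Cantor could steer the first $2^n$ row-completions onto the $2^n$ distinct vectors before any duplicate occurs, yielding $U=2^n$ with $F=m-2^n>0$ free rows remaining. Your final paragraph does not close this gap---it is confined to the regime $U+F=2^n$ (you write ``both responses would produce a duplicate and block $L$'', but a duplicate blocks only when $U+F=2^n$), and there it merely re-derives a special case of Invariant~A, which you had already established in general.

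The missing ingredient is precisely the paper's key observation: under ``0 first'', any row that is not essential has only $0$'s among its queried entries (answering $0$ on a non-essential row can never block). With this one closes the gap by a swap argument: if $\mathtt{FIXED}(L_t)=\{0,1\}^n$ while some row $r$ still has a $\star$ in position $k$, then $r$ is non-essential, hence all-$0$ on its queried coordinates, hence completable to the unit vector $e_k$; but the fixed row equal to $e_k$ carries a $1$ and is therefore essential, yet replacing it by $r$ in a useful subset built from fixed rows gives a useful subset avoiding it---a contradiction. The paper runs exactly this essentiality/swap argument, specialized to time $mn-1$. So your two-invariant framing is sound and Invariant~B is true, but the $U+F$ counting is not what carries it; the essentiality argument is, and once you add it your proof and the paper's coincide in substance.
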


In the following we consider an arbitrary execution of the game, where Kronecker follows the ``0 first" strategy (and Cantor's strategy is arbitrary). 
We denote by $L_t$ the $m\times n$ matrix $L$ after $t$ steps of the game; thus $L_0$ is the initial matrix which is filled only with $\star$'s.

By the fact that if $L$ is unblocked and $L(i,j)=\star$, then it is possible to set $L(i,j)$ to $0$ or to~$1$ without blocking $L$, we get:
\begin{observation}\label{obs:complete}
 If $L_t$ is unblocked, so is $L_{t+1}$. Hence $L_{mn}$ is complete; i.e.\ it contains $\{0,1\}^n$.
\end{observation}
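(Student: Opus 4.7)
The plan is to prove the observation by induction on $t$, showing that a single Kronecker response under the ``$0$ first'' rule cannot turn an unblocked matrix into a blocked one. The base case is trivial since $L_0$ consists entirely of $\star$'s and $m\geq 2^n$, so any $2^n$ distinct completion of any $2^n$ rows witnesses that $L_0$ is unblocked. For the inductive step, suppose $L_t$ is unblocked and Cantor queries some entry $L(i,j)=\star$. If setting $L(i,j)=0$ leaves $L$ unblocked, then by the ``$0$ first'' rule Kronecker replies with $0$, and $L_{t+1}$ is unblocked by construction.

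The only interesting case is when setting $L(i,j)=0$ would block $L$, in which case Kronecker replies $1$, and I must argue that $L_{t+1}$ (obtained from $L_t$ by setting entry $(i,j)$ to $1$) is still unblocked. Let $S$ be a useful subset witnessing that $L_t$ is unblocked, with an associated conversion $\phi$ mapping the $2^n$ rows of $S$ bijectively to $\{0,1\}^n$ by filling in $\star$'s. First, row $i$ must belong to $S$: otherwise $S$ would remain useful regardless of how the entry $(i,j)$ is set, contradicting the assumption that setting it to $0$ blocks $L$. Next, the vector $\phi(i)\in\{0,1\}^n$ must satisfy $\phi(i)(j)=1$: otherwise $\phi(i)(j)=0$ is compatible with setting $L(i,j)=0$, and $S$ would again witness that the resulting matrix is unblocked. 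Hence $\phi$ is compatible with setting $L(i,j)=1$, so $S$ remains a useful subset of $L_{t+1}$, proving $L_{t+1}$ is unblocked.

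By induction, $L_t$ is unblocked for every $t$. After $mn$ queries every entry of $L_{mn}$ is in $\{0,1\}$, so $L_{mn}$ has no $\star$'s; the useful subset of $L_{mn}$ is then literally a set of $2^n$ rows already equal to all binary vectors of length $n$. Thus $\mathtt{FIXED}(L_{mn})\supseteq\{0,1\}^n$, and since all entries are binary this inclusion is an equality, i.e.\ $L_{mn}$ is complete.

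The only mild subtlety worth flagging is the very first sub-case of the induction (row $i\in S$); everything else is bookkeeping. No auxiliary lemmas are needed, and the argument uses only the definitions of useful/unblocked and the precise form of the ``$0$ first'' rule.
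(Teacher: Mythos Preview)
Your proof is correct and follows essentially the same idea as the paper. The paper compresses the argument to a single line preceding the observation---``if $L$ is unblocked and $L(i,j)=\star$, then it is possible to set $L(i,j)$ to $0$ or to $1$ without blocking $L$''---and your case analysis (row $i\in S$, then $\phi(i)(j)=1$) is exactly the unpacking of that fact in the only nontrivial case, where setting to $0$ would block.
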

 
\begin{definition} We say that a row $L(i)$ is {\em essential} for an unblocked matrix $L$ if every useful subset of $L$'s rows contains $L(i)$.
\end{definition}
Note that  if $L_t(i)$ is essential for $L_t$, then $L_s(i)$ is essential for $L_s$ for all $s\geq t$. Also, if $L_{mn}(i)$ is essential for $L_{mn}$, then $L_{mn}(i)$ is equal to a unique vector in $\{0,1\}^n$ which is different from all other rows of $L_{mn}$.
\begin{lemma}\label{lem:NE}
    Assume that $L_t(i)$ is not essential for $L_t$ and $L_t(i,j)=\star$. If $L_t(i,j)$ is queried at time $t+1$, then   it is set to $0$, i.e. $L_{t+1}(i,j)=0$.
\end{lemma}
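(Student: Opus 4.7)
The plan is to unpack the definitions and use the non-essentiality hypothesis to exhibit a certificate that setting $L_t(i,j)=0$ does not block the matrix; once we have that, Kronecker's ``$0$ first'' rule immediately gives $L_{t+1}(i,j)=0$.

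First I would recall precisely what is being assumed. The hypothesis ``$L_t(i)$ is not essential for $L_t$'' means, by definition, that $L_t$ is unblocked and there exists a useful subset $S$ of $2^n$ rows of $L_t$ with $L_t(i)\notin S$. In particular, the rows in $S$ can be converted to $\{0,1\}^n$ by filling in each $\star$-entry (within those rows) by a suitable value in $\{0,1\}$. This is exactly the certificate we need.

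Next, let $L'$ denote the matrix obtained from $L_t$ by replacing the $\star$ at position $(i,j)$ with $0$. I would observe that the subset $S$ is disjoint from row $i$, and so every entry of every row in $S$ is unchanged in passing from $L_t$ to $L'$. Therefore the very same conversion of $\star$-entries in $S$ that witnessed $S$ being a useful subset of $L_t$ also witnesses $S$ being a useful subset of $L'$. This shows $L'$ has a useful subset, so $L'$ is unblocked. By Kronecker's ``$0$ first'' rule (\ref{str:CK}), since setting $L_t(i,j)$ to $0$ does not block $L$, Kronecker's reply at time $t+1$ is $0$, i.e., $L_{t+1}(i,j)=0$, as required.

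Honestly I do not expect a real obstacle here: the lemma is essentially a bookkeeping consequence of the definitions of ``essential,'' ``useful,'' ``blocked,'' and Kronecker's strategy. The only subtle point to keep explicit is that the witness subset $S$ avoids row $i$, which is precisely what non-essentiality gives us and which is exactly what makes the modification at $(i,j)$ harmless.
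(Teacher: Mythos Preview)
Your proof is correct and follows the same approach as the paper's: non-essentiality of row $i$ yields a useful subset $S$ avoiding that row, so fixing $L_t(i,j)=0$ leaves $S$ intact and hence does not block the matrix, whence the ``$0$ first'' rule outputs $0$. The paper states this in one line, while you (helpfully) spell out why the witness $S$ survives the modification.
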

\begin{proof}  By the ``0 first" strategy, and the fact that if  $L(i)$ is not essential for an unblocked matrix~$L$,  then setting $L(i,j)$ to $0$ does not block $L$.
\end{proof}
By a  straightforwards induction Lemma \ref{lem:NE} implies: 
\begin{corollary}\label{cor:4.9}
If $L_{t}(i)$ is not essential for $L_{t}$, then $L_{t}(i)$ contains no $1$'s (only $0$'s or $\star$'s). Specifically, if $L_{mn}(i)$ is not essential for $L_{mn}$, then $L_{mn}(i)$ is the zero vector $0^n$. Hence, every row of $L_{mn}$  which is not the zero vector is essential, and thus it is different from all other rows of $L_{mn}$. 
\end{corollary}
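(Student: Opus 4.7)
My plan is to prove the first assertion by induction on $t$, combining Lemma~\ref{lem:NE} with the monotonicity of essentiality noted just before it: if $L_t(i)$ is essential for $L_t$ then so is $L_s(i)$ for every $s\geq t$. Taking the contrapositive, whenever $L_{t+1}(i)$ is not essential for $L_{t+1}$, already $L_t(i)$ is not essential for $L_t$; this is the bridge that lets the induction hypothesis transfer from time $t$ to time $t+1$.

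The base case $t=0$ is immediate since $L_0$ is filled entirely with $\star$'s and thus contains no $1$'s. For the inductive step, fix a row $i$ with $L_{t+1}(i)$ not essential for $L_{t+1}$. By the monotonicity contrapositive, $L_t(i)$ is not essential for $L_t$, and the induction hypothesis gives that $L_t(i)$ consists of $0$'s and $\star$'s only. If step $t+1$ did not touch row $i$, then $L_{t+1}(i)=L_t(i)$ and we are done. If step $t+1$ queried some position $j$ of row $i$ (necessarily with $L_t(i,j)=\star$), then Lemma~\ref{lem:NE} forces the response $L_{t+1}(i,j)=0$, so the only change from $L_t(i)$ to $L_{t+1}(i)$ is a newly revealed $0$, leaving no $1$'s in $L_{t+1}(i)$.

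For the ``specifically'' part, assume, as one may since repeated queries are wasted, that Cantor never queries the same entry twice; then after $mn$ steps every entry of $L_{mn}$ lies in $\{0,1\}$, and combined with the first assertion any non-essential row must be identically $0$. The final ``hence'' statement follows by contrapositive: every non-zero row of $L_{mn}$ is essential, and as already recorded just before Lemma~\ref{lem:NE}, an essential row of a complete matrix coincides with a unique vector in $\{0,1\}^n$ and is therefore distinct from all other rows.

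The one point to watch is the direction of the inductive implication: the hypothesis refers to essentiality at time $t+1$, while Lemma~\ref{lem:NE} speaks about non-essentiality at time $t$. Bridging the two is exactly the role of the monotonicity observation, and it is the only non-formal step in the induction; everything else is bookkeeping.
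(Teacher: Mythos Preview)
Your argument is correct and is precisely the ``straightforward induction'' the paper alludes to: you induct on $t$, use the contrapositive of the monotonicity of essentiality to pass from non-essentiality at time $t+1$ back to time $t$, and then invoke Lemma~\ref{lem:NE} for the only possibly changing entry. The remaining two clauses are handled exactly as the paper intends, so there is nothing to add.
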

\begin{lemma}\label{lem:LAST}
  Let $L_{mn-1}(i,j)$ be the last bit queried in the game. Then $L_{mn-1}(i)$ is an essential row of $L_{mn-1}$.
  \end{lemma}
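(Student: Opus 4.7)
The plan is a short proof by contradiction. I will suppose that $L_{mn-1}(i)$ is not essential and derive that the $mn$-th query at position $(i,j)$ must have been superfluous, contradicting the premise that $(i,j)$ is actually the last bit queried in the game.

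The combinatorial step is immediate from the definitions. Since exactly $mn-1$ of the $mn$ entries have been queried by time $mn-1$, the matrix $L_{mn-1}$ contains a single $\star$-entry, namely at position $(i,j)$. In particular, every row $i'\neq i$ of $L_{mn-1}$ is fully determined. The non-essentiality assumption, by definition, produces a useful subset $S$ of $L_{mn-1}$'s rows that does not contain row $i$. But then $S$ consists entirely of fully determined rows (no $\star$-entries at all), and the only way such an $S$ can be useful is if it equals $\{0,1\}^n$ literally, as a set. Consequently, the $m-1$ rows of $L_{mn-1}$ other than row $i$ already contain every binary vector of length $n$.

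This means that at time $mn-1$ Cantor can already correctly conclude that $L$ is complete, with the value of $L(i,j)$ being entirely irrelevant to the decision problem. Hence an adequate Cantor strategy would have stopped at step $mn-1$ rather than making the query at $(i,j)$ at step $mn$, contradicting the premise. The main interpretive subtlety, and essentially the only non-combinatorial step in the argument, is this last one: one must read ``the last bit queried in the game'' as a bit whose value is genuinely required by Cantor to solve the problem, which is precisely the reading under which Theorem~\ref{thm:KC}'s claim that Kronecker ``forces'' Cantor to make $mn$ queries is meaningful.
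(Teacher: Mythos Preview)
Your combinatorial observation is correct and even cleaner than the paper's: if row $i$ is not essential in $L_{mn-1}$, then since all other rows are fully determined, a useful subset avoiding row $i$ must literally equal $\{0,1\}^n$, so the $m-1$ fully queried rows already cover everything. The problem is the step where you extract a contradiction from this.

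You conclude that Cantor ``would have stopped'' at step $mn-1$, and you flag that this requires reading ``the last bit queried'' as a bit that is \emph{genuinely required}. But that reading makes the argument circular with Theorem~\ref{thm:KC}. The whole point of the lemma is to feed into the proof that Cantor \emph{cannot} decide after $mn-1$ queries; you cannot assume that the $mn$-th query is necessary in order to prove the lemma that is then used to establish exactly that necessity. In the paper's setup Cantor's strategy is explicitly arbitrary, so nothing prevents Cantor from making a superfluous $mn$-th query, and your contradiction evaporates under that reading.

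The paper closes the argument differently, staying entirely inside the combinatorics of the ``0 first'' strategy. From your observation (or its weaker cousin via Observation~\ref{obs:complete}) one knows that every nonzero vector occurs among the rows $\neq i$; in particular the vector that is $1$ in coordinate $j$ and $0$ elsewhere occurs as some row $k$. One then checks that row $k$ is itself not essential in $L_{mn-1}$: replace row $k$ by row $i$ with its $\star$ set to $1$, and add any all-zeros row (there are $m-2^n>0$ of them by Corollary~\ref{cor:4.9}) to recover a useful subset avoiding $k$. But a non-essential row can contain only $0$'s (Corollary~\ref{cor:4.9}), contradicting the $1$ in row $k$. This contradiction uses only properties of Kronecker's strategy and makes no assumption about Cantor playing sensibly, which is exactly what is needed for Theorem~\ref{thm:KC}.
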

\begin{proof} To simplify notation, we assume without loss of generality that $j=1$. 
Assume towards contradiction that  $L_{mn-1}(i)$ is not essential for $L_{mn-1}$. By Corollary~\ref{cor:4.9}, this implies that $L_{mn-1}(i)=\star0^{n-1}$ and $L_{mn}(i)=0^n$.  (i.e.\ Kronecker sets $L_{mn-1}(i,1)$ to 0 at Cantor's $mn$'th query).  
Since $L_{mn}$ is complete (Observation~\ref{obs:complete}), this implies that $L_{mn-1}$ contains a distinct occurrence of each of the $2^n-1$ nonzero vectors of $\{0,1\}^n$, and in particular for some $k\neq i$, $L_{mn-1}(k)$ is the unique row of $L_{mn-1}$ which equals $10^{n-1}$. Then, any subset $S$ of $L_{mn-1}$ which contains
\begin{itemize}
\item the row $L_{mn-1}(i)$, 
\item the $2^n-2$ non zero rows of $L_{mn-1}$ excluding $L_{mn-1}(k)$, and
\item  some zero row of $L_{mn-1}$ (by Corollary~\ref{cor:4.9} there are $m-2^n>0$ such rows in $L_{mn-1}$),  
 \end{itemize}
is a useful subset of $L_{mn-1}$ which does not contain $L_{mn-1}(k)$. 
Hence $L_{mn-1}(k)$ is not essential for $L_{mn-1}$, and by Lemma \ref{lem:NE} $L_{mn-1}(1)=0\neq 1$, which stands in contradiction with  $L_{mn-1}(1)=10^{n-1}$. 
\end{proof}
\begin{proof}[Proof of Theorem \ref{thm:KC}]
Let $L_{mn-1}(i,j)$ be the last query in the game.   By Lemma \ref{lem:LAST}, $L_{mn-1}(i)$, and hence also $L_{mn}(i)$,  is essential, meaning that $L_{mn}(i)$  is different from all other rows of $L_{mn}$. Thus Cantor must get the value of $L_{mn-1}(i,j)$  in order to reach a decision.
\end{proof}

\paragraph{A remark on computational complexity.}
 A naive implementation of the ``$0$ first'' strategy might take exponential time: 
    indeed, it requires checking whether setting the queried bit to $0$ blocks the current matrix, 
    which involves checking a potentially exponential list of constraints.
Nevertheless, we next show that this strategy in fact admits a polynomial time implementation.
    Firstly, notice that the first $m-2^n$ steps are trivially efficient, 
    because setting $L(i,j)$ to any value cannot block $L$ (since at least~$2^n$ rows of $L$ are not queried yet). 

Thus it suffices to show that in each later step, deciding whether setting 
    $L(i,j)$ to $0$ blocks the matrix, can be performed in time which is polynomial in $mn$, the size of $L$.
    Let $L_t$ be the matrix $L$ after $t$ steps of the game, $t>m-2^n$. 
    Consider the bipartite graph $G_t=(A_t,B,E_t)$, 
    where~$A_t=\{L_t(i):1\leq i\leq m\}$ is the set of rows of $L_t$, $B=\{0,1\}^n$, 
    and $(L_t(i),u)\in E_t$ if and only if~$L_t(i)$ can be converted to the binary vector $u$ 
    by replacing the~$\star$'s in $L_t(i)$ (if any) by binary digits. 
    Then, a subset $S$ of $L_t$ is useful for $L_t$ if and only if 
    $G_t$ contains a perfect matching between the vertices in $A_t$ which correspond to $S$ and $B$. 

Assume now that we are given the graph $G_t$, and the corresponding matching,
    and let $L_t(i,j)$ be the entry queried by Cantor at step $t+1$. 
    To check if setting $L_t(i,j)$ to 0 blocks $L_t$, we remove from $G_t$ all the edges 
    $(L_t(i),u)$ in which $u(j)=0$, and check if the resulted graph contains a perfect matching. 
    Since we are given a perfect matching $M_t$ for $G_t$, 
    and removing these edges eliminates at most one edge from $M_t$, 
    this checking can be done by executing one phase in some classical algorithm for bipartite matching, which can be done in $O(|E_t|)=O(m2^n)=O(m^2)$ time (see e.g. \cite{EvenBook}).

\section{Concluding Remarks and Future Research}\label{sec:future}

We studied the Cantor-Kronecker game for different values of $m$ and $n$: when $m \leq n$ the trivial lower bound of $m$ is tight (a lower bound of $m$ follows because Cantor must query at least one bit in each vector); when $m\geq 2^n$, the trivial upper bound of $mn$ is tight (an upper bound of $mn$ follows because querying all the bits is clearly sufficient); when $n < m < 2^n$ the landscape is more interesting, and in particular the bounds depend on whether Cantor is adaptive or oblivious.

\paragraph{Further Research.}
We conclude with suggestions for possible future research:
\begin{enumerate}

\item Study the Cantor-Kronecker game when there are $r$ rounds of adaptivity: i.e.\ there are $r$ rounds in which Cantor can submit  queries, and in each round the submitted queries may depend on Kronecker's answers to queries from previous rounds. How does the query complexity change as a function of $r$? Note that $r=1$ is the oblivious case and $r=\infty$ is the adaptive case. (In fact $r=n$ is already equivalent to $r=\infty$.)
\item Consider the following generalization of the game. Let $k\leq m, \ell\leq n$ be positive integers. Kronecker maintains an $m\times n$ binary matrix, and Cantor queries the entries of Kronecker's matrix. Cantor's goal is to find a $k\times \ell$ matrix which does not appear as a submatrix of Kronecker's $m\times n$ matrix, or to decide that one does not exist. So, the original game is when $k=1, \ell=n$. What is the query complexity as a function of $k,\ell,m,n$ in the adaptive/oblivious case? For which values does Cantor have a strategy that uses strictly less than $m\cdot n$ queries?
    \item
   Find tighter bounds for the oblivious case. Specifically, notice that Cantor's original diagonalization provides tight bound on the number of queries needed for the oblivious case when $m\leq n$. 
   {It will be interesting to derive tight bounds and optimal strategies in the remaining cases.
   As we exemplify below, this question has connections with natural combinatorial problems.}
   
Consider the case when $m$ is at the other end of the scale, 
   namely  {$2^{n-1}\leq m<2^n$}.
   Then, Cantor can win the game by querying $nm-d$ bits, where $d=2^n-m-1$.
  {In fact, it suffices that Cantor chooses his queries such that} each of the $d$ unqueried entries belongs to a different vector: in this case any assignments of values to the unqueried entries covers (in the sense of Definition~\ref{def:covering})  the $m-d$ fully queried vectors, and at most two additional vectors per each of the remaining $d$ vectors (each of which contains one unqueried entry): altogether at most $(m-d)+2d=m+d$ vectors. Hence, Cantor is guaranteed to win the game provided that $m+d<2^n$ (equivalently $d\leq 2^n-m-1$).
  
 {Is the above strategy optimal? i.e., can Kronecker win the game when Cantor queries only $mn-(2^n-m)$  bits?   
 Informally, Kronecker has a winning strategy if, for any distribution of the $2^n-m$ unqueried entries, there is an assignment  which covers sufficiently many vectors. This is formalized below.
 \begin{definition}[$\mathtt{cube}(v),J\mbox{-}$cube]\label{def:cube}
 Let $v$ be a vector with possibly some unqueried entries.  $\mathtt{cube}(v)$ is the set of  binary vectors which can be obtained by replacing the unqueried entries in $v$ by zeros or ones. In particular, $\mathtt{cube}(v)=\{v\}$ if $v$ is fully queried.
 The cube $\mathtt{cube}(v)$ is called a $J$-cube if $J =\{j:~\mbox{the } j'th~\mbox{bit of } v ~\mbox{is not queried}\}$.
 For $j\in [n]$, a $\{j\}\mbox{-}$cube is denoted by {$j$-edge}. 
 \end{definition}
 
  {Assume that Cantor distributes the $(2^n-m)$ unqueried entries  among vectors $v_1,\ldots,v_q$. 
  Then Kronecker answers to the queried entries define a cube $C(v_i)$ for each vector $v_i$.} Kronecker wins if and only if those cubes cover $\{0,1\}^n$. Hence Kronecker has a winning strategy when Cantor uses $mn-(2^n-m)$ queries ($2^{n-1}+1\leq m <2^n$) if and only if the following holds:
\begin{conjecture}\label{con:opt}
 {Let  $d=2^n-m<2^{n-1}$.}
For any collection $J_1,J_2,\ldots,J_q$ of nonempty subsets of $[n]$ satisfying $\sum_{i=1}^q |J_i|=d$,   there are cubes $C_1,\ldots,C_q$ s.t. $C_i$ is a $J_i\mbox{-}$cube,   and $\lvert\bigcup_{i=1}^{q} C_i\rvert\geq d+q$.
\end{conjecture}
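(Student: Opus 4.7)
The plan is to reduce the conjecture to a constrained matching problem in the hypercube and then prove that matching statement by induction on $n$. The starting observation is that $|C_i| = 2^{|J_i|} \geq |J_i|+1$, with equality iff $|J_i|=1$, so $\sum_i |C_i| \geq d+q$ is tight precisely in the \emph{singleton regime} where every $J_i$ is a single coordinate. In that regime, each $C_i$ is a single $j_i$-edge of $\{0,1\}^n$ and the goal $|\bigcup C_i| \geq 2q = d+q$ is equivalent to the $C_i$'s being pairwise vertex-disjoint. The singleton case of the conjecture therefore reads as follows: given a multiset of directions $(n_1,\ldots,n_n)$ with $\sum_j n_j = q < 2^{n-1}$, there is a matching in $\{0,1\}^n$ consisting of exactly $n_j$ edges of direction $j$ for each $j$.

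To reduce the general conjecture to this singleton statement I would induct on the ``excess'' $\sum_i (|J_i|-1)$. If some $|J_i|\geq 2$, pick $j \in J_i$ and split $J_i=\{j\} \cup (J_i\setminus\{j\})$; the enlarged family has the same $d$, has $q$ increased by one, and has strictly smaller excess, so by the induction hypothesis there exist cubes with union at least $d+q+1$. I would then merge the new $\{j\}$-edge with the corresponding $(J_i\setminus\{j\})$-cube into a single $J_i$-cube $C_i$ containing them both; this merge is possible exactly when the two smaller cubes are aligned on the coordinates outside $J_i$, so the bulk of the work in the reduction lies in arranging the recursive construction to enforce this alignment.

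For the singleton statement I would try induction on $n$. Split $\{0,1\}^n$ along some coordinate $j^*$ into two copies of $\{0,1\}^{n-1}$, place $n_{j^*}$ of the direction-$j^*$ edges as crossing edges between the halves, and distribute the remaining budget as $a_j+b_j=n_j$ between the halves. Each half then becomes a sub-matching problem on $\{0,1\}^{n-1}$ with the extra constraint of avoiding the vertices already used by the crossing edges.

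The main obstacle is that this naive induction closes only up to $q\leq 2^{n-2}$ in each half, whereas the hard case $q=2^{n-1}-1$ with $n_{j^*}=0$ forces one half to host at least $2^{n-2}$ edges. Worse, at that perfect-matching threshold there are genuine exceptions: for example, the multiset $(1,1)$ is \emph{not} realizable as a perfect matching of $\{0,1\}^2$. I would therefore need to strengthen the induction hypothesis to both (i) allow a set of forbidden vertices, corresponding to those consumed by the crossing edges, and (ii) characterize the direction multisets that are realizable at the perfect-matching threshold. Coordinating the forbidden sets in the two halves---which must be shifts of one another along coordinate $j^*$---with the chosen direction split is likely the crux of the argument. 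If a clean induction cannot be made to go through, one could instead reformulate the problem via Hall's theorem on an auxiliary bipartite graph, but the vertex-disjointness constraint is not naturally bipartite, so some new structural lemma about constrained matchings in the hypercube seems to be required.
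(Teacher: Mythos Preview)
This statement is a \emph{conjecture} in the paper, not a theorem; the paper does not prove it. It only records that the singleton case (every $|J_i|=1$) is precisely the constrained hypercube matching theorem cited immediately afterward, and leaves the general case open. So there is no proof in the paper to compare your attempt against, and your inductive sketch on $n$ for the singleton case is in any event superseded by that known result (whose proof already handles the boundary $d=2^{n-1}-1$ that your induction does not close).

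The genuine gap in your plan is the reduction step. When you split $J_i$ into $\{j\}$ and $J_i\setminus\{j\}$ and apply the inductive hypothesis to the enlarged family, you obtain a $\{j\}$-edge $C'$ and a $(J_i\setminus\{j\})$-cube $C''$ with no control whatsoever over their values on the coordinates outside $J_i$; in general they lie in no common $J_i$-cube, so the merge is simply unavailable. Forcing the alignment would require proving the conjecture with additional anchoring constraints on some of the cubes, which is at least as hard as the original statement and is not addressed by your outline. In short, your proposal correctly isolates the tight (singleton) regime but supplies no mechanism to lift the known matching result to general $J_i$; that lifting is exactly what remains open.
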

%

The following result of \cite{FHK93} proves Conjecture  \ref{con:opt} for the case that each $J_i\mbox{-}$cube is a  $j_i\mbox{-}$edge. 
    \begin{theorem}[\cite{FHK93}]
    Let $d<2^{n-1}$. For any multiset $D=\{j_1,j_2,\ldots,j_d\}$ of  elements of $[n]$,  $\{0,1\}^n$ contains a matching $\{e_1,\ldots,e_d\}$ s.t. for  $i=1,\ldots,d$, $e_i$ is a $j_i\mbox{-}$edge. 
    \end{theorem}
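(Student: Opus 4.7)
The plan is induction on $n$. The base case $n = 1$ is vacuous ($d < 1$ forces $d = 0$). For the inductive step, let $d_j$ denote the multiplicity of coordinate $j$ in $D$. If $d = 0$ we are done; otherwise pick some $s \in [n]$ with $d_s \geq 1$. Split $\{0,1\}^n$ along coordinate $s$ into two sub-cubes $C^0 = \{v : v_s = 0\}$ and $C^1 = \{v : v_s = 1\}$, each isomorphic (as a graph) to $\{0,1\}^{n-1}$. The $d_s$ occurrences of $s$ in $D$ will be realized by \emph{bridge} edges (direction-$s$ edges crossing between $C^0$ and $C^1$); the remaining $d - d_s$ entries of $D$, supported on $[n] \setminus \{s\}$, are split into multisets $D_0, D_1$ of sizes $d_0, d_1$ and realized recursively as matchings $M_0 \subseteq C^0$ and $M_1 \subseteq C^1$ via the inductive hypothesis.

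Choosing a balanced split $d_b \in \{\lfloor (d-d_s)/2 \rfloor, \lceil (d-d_s)/2 \rceil\}$, the bound $d \leq 2^{n-1} - 1$ combined with $d_s \geq 1$ yields $d_b \leq 2^{n-2} - 1 < 2^{n-2}$, so the inductive hypothesis applies in each half. Letting $\pi$ be the projection dropping coordinate $s$, the $d_s$ bridges can be installed iff $|\pi(C^0 \setminus V(M_0)) \cap \pi(C^1 \setminus V(M_1))| \geq d_s$: there must be $d_s$ ``columns'' $x \in \{0,1\}^{n-1}$ where both halves have an unused vertex over which to flip coordinate $s$.

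The main obstacle is arranging this disjointness condition. A naive inclusion--exclusion gives the intersection size at least $2^{n-1} - 2(d - d_s)$, which is $\geq d_s$ only when $d_s \geq 2d - 2^{n-1}$. This fails precisely when $d$ is close to $2^{n-1}$ and $D$ is spread across many directions (so $d_s$ is small). To handle this regime I would strengthen the inductive hypothesis to give some control over which vertices the recursive matching leaves uncovered; informally, since $d < 2^{n-1}$ strictly, the slack of $\geq 2$ uncovered vertices guaranteed by the strict inequality should be steerable toward prescribed targets. With such a strengthening available, one would first select $d_s$ bridge columns and then invoke the strengthened hypothesis in each half to force the corresponding column-endpoints to remain unmatched.

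The hard part is formulating and proving the strengthened hypothesis. A small case ($n = 2$, $d = 1$, $D = \{1\}$, forbidden set $\{00, 01\}$) shows that a naive version -- ``avoid any prescribed vertex set of size up to $2^n - 2d$'' -- is too much to ask, so the correct formulation must exploit additional structure (the bridge endpoints come from a column-pair structure, not an arbitrary set). My intended approach is an alternating-path exchange: starting from any direction-realizing matching produced by the unstrengthened hypothesis, perform swaps along alternating walks that cycle within a fixed direction, each swap preserving the direction-multiset while migrating uncovered vertices toward the prescribed targets, and using the $\geq 2$ slack vertices as the seed of each exchange. Executing this bookkeeping while maintaining the direction-multiplicity $D$ is where the substantive combinatorial work of the proof concentrates; the rest of the induction is a routine split-and-count.
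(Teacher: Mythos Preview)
The paper does not prove this theorem at all: it is quoted from \cite{FHK93} and used as a black box to verify Conjecture~\ref{con:opt} in the edge case, so there is no in-paper argument to compare against.

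That said, your write-up is not a proof but a plan with a self-declared gap. You correctly isolate the obstacle: after splitting along a coordinate $s$ and recursing in the two halves, you need $d_s$ ``columns'' free on both sides, and the crude inclusion--exclusion bound $2^{n-1}-2(d-d_s)\ge d_s$ fails exactly in the interesting regime $d$ close to $2^{n-1}$ with small multiplicities. Your proposed remedy --- strengthen the inductive hypothesis so as to steer the uncovered vertices, then realize the steering by alternating-path exchanges that preserve the direction multiset --- is only a sketch: you neither state the strengthened hypothesis precisely (and you yourself exhibit a counterexample to the most natural candidate), nor carry out the exchange argument. Alternating paths in the hypercube that swap a $j$-edge for another $j$-edge are not guaranteed to reach a prescribed target vertex without colliding with the rest of the matching, and maintaining the full multiset $D$ along such a walk is exactly where the difficulty lies. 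As written, the argument stops at the point where the real combinatorics begins; to make it a proof you would need, at minimum, a precise formulation of the strengthened statement that survives your own $n=2$ counterexample and a verification that the exchange step terminates.
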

    It is also shown in \cite{FHK93} that Conjcture \ref{con:opt} does not hold when  $d= 2^{n-1}$: in this case   a corresponding matching exists if and only if each element in $[n]$ occurs an even number of times in $D$.
    This implies that when $m=2^{n-1}$ Cantor has a winning strategy with only $mn -(2^n-m)=mn-2^{n-1}$ queries: he may query $n-1$ entries per each vector, so that at least one dimension is left unqueried in an odd number of vectors. 
    }
    


  
 \end{enumerate}

\subsection*{Acknowledgements} We would like to thank 
Nikhil Vyas and Ryan Williams for bringing references \cite{BF72,KANNAN1982,VW23} to our attention, and Ron Holzman for informing us about the result in~\cite{FHK93}. 
We also thank Ariel Gabizon, and Yuval Wigderson for providing insightful comments on a previous version of this manuscript.
    
\bibliographystyle{alphaurl}
\bibliography{bib.bib}

\end{document}